\documentclass[11pt,a4paper]{amsart}

 \usepackage[utf8x]{inputenc}
\usepackage{latexsym}
\usepackage{color,graphicx,shortvrb}
\usepackage{amsmath, amssymb}
\usepackage{amsfonts}
\usepackage[colorlinks, bookmarks=true]{hyperref}

\newtheorem{theorem}{Theorem}[section]
\newtheorem{lemma}[theorem]{Lemma}

\theoremstyle{definition}

\newtheorem{remark}[theorem]{Remark}



\setlength{\textwidth}{16cm}       
\setlength{\oddsidemargin}{0.25cm}   
\setlength{\evensidemargin}{0.25cm}  
\setlength{\topmargin}{1.2cm}     

\author{J. M. Almira}
\title{On Loewner's characterization of polynomials}

\begin{document}
\keywords{Functional equations, Schwartz distributions, Polynomials, Exponential polynomials, Montel type theorem}


\subjclass[2010]{39B22, 39A70, 39B52}

\address{Departamento de Matem\'{a}ticas, Universidad de Ja\'{e}n, E.P.S. Linares,  Campus Cient\'{\i}fico Tecnol\'{o}gico de Linares, Cintur\'{o}n Sur s/n, 23700 Linares, Spain}
\email{jmalmira@ujaen.es}


\begin{abstract}
We give a new demonstration of Loewner's characterization of polynomials, solving in the positive a conjecture proposed by Laird and McCann in 1984.
\end{abstract}

\maketitle

\markboth{J. M. Almira}{ Loewner's characterization of polynomials}

\section{Introduction}

We give a new proof of the following result:

\begin{theorem}[Loewner, 1959]
Assume that $d>1$ is a natural number. Let $f\in C(\mathbb{R}^d)$ and let $R_f=\mathbf{span}\{f(Lx):L \text{ is an isometry of }\mathbb{R}^d\}$. Then $f$ is an ordinary polynomial if and only if $\dim R_f<\infty$. 
\end{theorem}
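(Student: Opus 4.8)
The plan is to prove the two implications separately. The forward direction is routine: every isometry of $\mathbb{R}^d$ is an affine map $Lx=Ox+b$ with $O$ orthogonal and $b\in\mathbb{R}^d$, so if $f$ is a polynomial of degree at most $n$ then so is every $f\circ L$, and hence $R_f$ is contained in the finite-dimensional space of polynomials of degree $\le n$. All the content is in the converse, and for it I would combine the structure theory of finite-dimensional translation-invariant spaces of continuous functions with a connectedness property of the rotation group.

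\textbf{Step 1: $f$ is an exponential polynomial.} Suppose $N:=\dim R_f<\infty$. Since a composition of isometries is an isometry, $R_f$ is invariant under $g\mapsto g\circ M$ for every isometry $M$; in particular it is invariant under all translations and under every $O\in O(d)$. As $f=f\circ\mathrm{id}\in R_f$, the function $f$ generates a finite-dimensional translation-invariant subspace of $C(\mathbb{R}^d)$. From a basis $g_1,\dots,g_N$ of $R_f$ one obtains the Levi-Civit\`a type functional equations $g_i(x+v)=\sum_{j=1}^{N}c_{ij}(v)\,g_j(x)$, whose coefficient matrix $v\mapsto\big(c_{ij}(v)\big)$ is a continuous matrix representation of $(\mathbb{R}^d,+)$, hence of the form $\exp\!\left(\sum_{i=1}^{d}v_iA_i\right)$ with pairwise commuting matrices $A_i$; equivalently, the distributional solutions of this system are exponential polynomials. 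I would use this (a classical, distribution-theoretic, Montel-type fact) to conclude
\[
 f(x)=\sum_{k=1}^{m}P_k(x)\,e^{\langle a_k,\,x\rangle},
\]
with pairwise distinct $a_k\in\mathbb{C}^d$ and nonzero polynomials $P_k$. More precisely, $R_f\otimes\mathbb{C}$ splits as the direct sum of the joint generalized eigenspaces of the commuting partial-derivative operators, each eigenspace consisting of functions $p(x)\,e^{\langle\lambda,x\rangle}$; denote by $\Lambda\subset\mathbb{C}^d$ the resulting \emph{finite} set of eigenvalues.

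\textbf{Step 2: rotations force $\Lambda=\{0\}$.} If $\lambda\in\Lambda$, pick $0\neq g=p(x)\,e^{\langle\lambda,x\rangle}$ in $R_f\otimes\mathbb{C}$. For $O\in O(d)$ the function $g\circ O^{-1}$ again lies in $R_f\otimes\mathbb{C}$, and $g(O^{-1}x)=p(O^{-1}x)\,e^{\langle O\lambda,\,x\rangle}$, so by uniqueness of the exponential-polynomial representation $O\lambda\in\Lambda$. Hence $\Lambda$ is invariant under $O(d)$, in particular under $SO(d)$. Now use $d>1$: $SO(d)$ is connected, acts transitively on the unit sphere of $\mathbb{R}^d$, and therefore fixes no nonzero real vector; and since the complexified action fixes $\lambda=u+iv$ only if it fixes $u$ and $v$ separately, $SO(d)$ fixes no nonzero vector of $\mathbb{C}^d$. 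Consequently the $SO(d)$-orbit of any $\lambda\neq0$ is a connected set containing more than one point, hence infinite, which contradicts the finiteness of $\Lambda$ unless $\Lambda=\{0\}$. When $\Lambda=\{0\}$ the expansion above reduces to $f=P_0$, an ordinary polynomial, which completes the proof.

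\textbf{Main obstacle.} The delicate point is Step 1: $f$ is merely continuous, so the regularity needed even to speak of frequencies must be manufactured from the invariance of $R_f$. This is exactly what the smoothness of continuous one-parameter subgroups of $GL(R_f)$ — equivalently, the solvability of the Levi-Civit\`a system in exponential polynomials — provides, and I expect the bulk of the paper, and the reason for the Schwartz-distribution language in the keywords, to be devoted to making this reduction clean and self-contained. Once $f$ is known to be an exponential polynomial, the rotation argument in Step 2 is short.
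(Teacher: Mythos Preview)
Your proof is correct, but it follows a genuinely different route from the paper---in fact it is essentially Loewner's original argument, which the paper explicitly sets out to bypass. Your Step~1 is the Anselone--Korevaar theorem (finite-dimensional translation-invariant subspaces of $C(\mathbb{R}^d)$ consist of exponential polynomials), and Step~2 is a clean orbit-finiteness argument using connectedness of $SO(d)$. The paper never writes $f$ as an exponential polynomial. It works on the operator side: the conjugacy $\tau_{Py}=O_{P^{-1}}\tau_y O_P$ forces $\tau_y$ and $\tau_{Py}$ to share a characteristic polynomial on $V=R_f$, and the Frobenius simultaneous-triangularization lemma for the commuting family $\{\tau_y\}$ shows the eigenvalues of $\tau_{(P-I)e_1}$ are quotients $\lambda_{\sigma(i)}(\tau_{e_1})/\lambda_i(\tau_{e_1})$. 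One then builds a single polynomial $q$ with $q(0)\neq 0$ annihilating $\tau_z$ for every $\|z\|\le 2$ (here $d>1$ enters via $\{\|(P-I)e_1\|:P\in O(d)\}=[0,2]$), and a Fr\'echet/Montel-type lemma converts $q(\tau_z)f=0$ directly into ``$f$ is a polynomial''. Thus the paper answers the Laird--McCann conjecture that Loewner's theorem admits a Fr\'echet-style proof avoiding Anselone--Korevaar; your proposal recovers the theorem but not this point. Your guess about the ``main obstacle'' is also off: the distribution language is there to strengthen the result to $f\in\mathcal{D}'(\mathbb{R}^d)$, not to manufacture regularity for a continuous $f$.
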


This theorem was demonstrated by Loewner using Anselone-Koreevar's theorem \cite{anselone}, which claims that exponential polynomials can be characterized as elements of finite dimensional translation invariant subspaces of $C(\mathbb{R}^d)$. Later on, in 1984, Laird and McCann \cite{Laird} gave a characterization of polynomials as the elements of finite dimensional subspaces of $C(\mathbb{R}^d)$ which are simultaneously translation and dilation invariant (their result allows $d=1$). Their proof used Fr\'{e}chet's characterization of polynomials, instead of Anselone-Koreevar's theorem. Then they conjectured that a similar proof for Loewner's theorem should exist. The main goal of this short note is to solve their conjecture in the positive. 

\section{Main result}
Note that, if $\dim R_f<\infty$ then for all $y\in\mathbb{R}^d$ and all $P\in \mathbf{O}(d):=\{A\in\mathbf{GL}_d(\mathbb{R}): A^t=A^{-1}\}$  the maps $\tau_y(g)(x)=g(x+y)$ and $O_P(g)(x)=g(Px)$ are well defined as operators from $R_f$ into $R_f$. In fact, they are automorphisms of this space, since they are injective and $\dim R_f<\infty$. Indeed, $(\tau_y)^{-1}=\tau_{-y}$ and $(O_P)^{-1}=O_{P^{-1}}=O_{P^{t}}$.

Let  $X_d$ denote indistinctly either the space $\mathcal{D}(\mathbb{R}^d)'$ of Schwartz complex valued distributions  defined on $\mathbb{R}^d$ or the space $C(\mathbb{R}^d)$ of continuous complex valued functions defined on $\mathbb{R}^d$. For $f\in \mathcal{D}(\mathbb{R}^d)'$  we can introduce the translation operator
\[
\tau_h (f)\{\phi\} =f\{\tau_{-h}(\phi)\} ,\text{ where } h\in\mathbb{R}^d \text{ and } \phi \in\mathcal{D}(\mathbb{R}^d) \text{ is any test function,}
\]
and the operator
\[
O_P(f)\{\phi\} =\frac{1}{|\det(P)|}f\{O_{P^{-1}}(\phi)\}, 
\] 
where  $P\in\mathbf{GL}_d(\mathbb{E})$  is any invertible matrix,   $\phi \in\mathcal{D}(\mathbb{R}^d)$   is any test function, and 
$O_{P^{-1}}(\phi)(x)=\phi(P^{-1}x)$ for all $x\in\mathbb{R}^d$.

We demonstrate the following result, which is stronger  than Loewner's theorem, since it works for distributions.

\begin{theorem} \label{main} Let $d\geq 2$ be a natural number, let $f\in X_d$ and assume that, for a certain finite dimensional space $V\subseteq X_d$ we have that:
\begin{itemize}
\item $f\in V$
\item $V$ is translation invariant (i.e., $\tau_h (V)\subseteq V$ for all $h\in \mathbb{R}^d$).
\item $V$ is invariant by orthogonal transformations of $\mathbb{R}^d$ (i.e., $O_{P} (V) \subseteq  V$, for all $P\in\mathbf{O}(d)$). 
\end{itemize}
Then $f$ is, in distributional sense, an ordinary polynomial on $\mathbb{R}^d$. In particular, $f$ is equal almost everywhere to an ordinary polynomial and, if $f$ is a continuous ordinary function, then it is an ordinary polynomial.  
\end{theorem}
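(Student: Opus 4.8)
The plan is to prove that $f$ satisfies a Fréchet functional equation $\Delta_h^{m}f=0$ for every $h\in\mathbb R^d$, with $m=\dim V$, and then to invoke the (distributional) form of Fréchet's characterization of polynomials. Since $\Delta_h=\tau_h-I$, one has $\Delta_h^m=(\tau_h-I)^m=\sum_{k=0}^m(-1)^{m-k}\binom mk\tau_{kh}$ as operators on $V$, so it is enough to show that each $\tau_h$ acts \emph{unipotently} on $V$, i.e. $(\tau_h-I)^m=0$ on $V$.

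First I would put the translation action into normal form. By the remark preceding the theorem, each $\tau_h$ is an automorphism of $V$, the $\tau_h$ pairwise commute, and $\tau\colon(\mathbb R^d,+)\to\mathbf{GL}(V)$ is a \emph{continuous} homomorphism: for fixed $g\in V$ and test function $\phi$, $h\mapsto(\tau_h g)\{\phi\}=g\{\tau_{-h}\phi\}$ is continuous because translation is continuous on $\mathcal D(\mathbb R^d)$, and on the finite-dimensional $V$ the weak and norm topologies agree (the $C(\mathbb R^d)$ case is the same). A commuting family of operators on a finite-dimensional complex space is simultaneously triangularizable; fixing such a basis of $V_{\mathbb C}$, the diagonal entries of the $\tau_h$ are continuous homomorphisms $\mathbb R^d\to\mathbb C^{\times}$, hence of the form $h\mapsto e^{\langle\mu_i,h\rangle}$ for certain $\mu_1,\dots,\mu_m\in\mathbb C^d$. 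Thus the characteristic polynomial of $\tau_h$ is $\prod_{i=1}^m\bigl(z-e^{\langle\mu_i,h\rangle}\bigr)$, and unipotence of all $\tau_h$ is equivalent to $\mu_1=\dots=\mu_m=0$.

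Now I would bring in the orthogonal invariance. A direct computation (valid in both settings) gives $O_P\,\tau_h\,O_P^{-1}=\tau_{P^{-1}h}$ for every $P\in\mathbf O(d)$, so $\tau_{P^{-1}h}$ is similar to $\tau_h$ and has the same characteristic polynomial: $\prod_i\bigl(z-e^{\langle\mu_i,P^{-1}h\rangle}\bigr)=\prod_i\bigl(z-e^{\langle\mu_i,h\rangle}\bigr)$ for all $h$. Comparing the coefficient of $z^{m-1}$ and using $\langle\mu_i,P^{-1}h\rangle=\langle P\mu_i,h\rangle$ gives $\sum_i e^{\langle P\mu_i,h\rangle}=\sum_i e^{\langle\mu_i,h\rangle}$; since the functions $h\mapsto e^{\langle\mu,h\rangle}$ for distinct $\mu\in\mathbb C^d$ are linearly independent, the multiset $M=\{\mu_1,\dots,\mu_m\}\subseteq\mathbb C^d$ is invariant under every $P\in\mathbf O(d)$. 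The key point is that, because $d\geq2$, a finite $\mathbf O(d)$-invariant subset of $\mathbb C^d$ must equal $\{0\}$: writing $\mu=a+ib$ with $a,b\in\mathbb R^d$, the $\mathbf{SO}(d)$-orbit of the pair $(a,b)$ in $\mathbb R^d\times\mathbb R^d$ is then finite and connected, hence a single point, so $\mathbf{SO}(d)$ fixes both $a$ and $b$; but for $d\geq2$ the half-turn in a plane through a nonzero vector negates that vector, so $a=b=0$. Hence $M=\{0\}$, every $\tau_h$ is unipotent, and $\Delta_h^m f=(\tau_h-I)^m f=0$ for all $h\in\mathbb R^d$.

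It remains to deduce that $f$ is a polynomial of degree $<m$ from $\Delta_h^m f=0$. For smooth functions this is immediate: differentiating $t\mapsto f(x+th)$ $m$ times shows all partial derivatives of $f$ of order $m$ vanish. The general distributional case follows by mollification — $f*\varphi_\varepsilon$ satisfies the same equation, hence is a polynomial of degree $<m$, and that space of polynomials is finite-dimensional, hence closed in $\mathcal D'(\mathbb R^d)$, so its limit $f$ is such a polynomial too; the continuous case is the classical theorem of Fréchet, and the ``in particular'' statements are then trivial. The step I expect to be the heart of the matter, and the one needing the most care, is the orbit argument: it is tempting but insufficient to note only that $\mathbf O(d)$ acts transitively on spheres — this yields just that the characteristic polynomial of $\tau_h$ depends on $\|h\|$ alone, which would still allow exponents such as $\pm i$ (as for $\cos$ and $\sin$); one really needs the conjugation relation $O_P\tau_hO_P^{-1}=\tau_{P^{-1}h}$ to control the full exponent vectors $\mu_i$, and it is exactly here that the hypothesis $d\geq2$ is used.
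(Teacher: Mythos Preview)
Your proof is correct but follows a different route from the paper's. You establish continuity of $h\mapsto\tau_h$ on $V$, deduce that the diagonal characters in a simultaneous triangularization are exponentials $e^{\langle\mu_i,h\rangle}$, and then use that the finite $\mathbf{O}(d)$-invariant multiset $\{\mu_i\}\subset\mathbb{C}^d$ must reduce to $\{0\}$ by connectedness of $\mathbf{SO}(d)$; hence every $\tau_h$ is unipotent and $\Delta_h^m f=0$ for all $h$. The paper, by contrast, argues purely algebraically, with no continuity of the representation and no topology on $\mathbf{O}(d)$: via Frobenius's lemma on the commuting pair $\tau_{e_1},\tau_{(P-I)e_1}$ it shows that the eigenvalues of $\tau_{(P-I)e_1}$ are always ratios $\lambda_{\sigma(i)}(\tau_{e_1})/\lambda_i(\tau_{e_1})$, builds the single polynomial $q(z)=\prod_{\sigma\in S_N}\prod_i\bigl(z-\lambda_{\sigma(i)}/\lambda_i\bigr)$ with $q(0)\neq0$, observes that $\{\|(P-I)e_1\|:P\in\mathbf{O}(d)\}=[0,2]$ so that $q(\tau_z)=0$ for every $\|z\|\le2$, and then reaches Fr\'echet through a separate reduction lemma turning $q(\tau_y)f=0$ into an iterated-difference equation. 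Your approach is shorter and more conceptual, and it yields the sharp conclusion of unipotence directly; the paper's approach is more elementary in that it needs no analytic input on $h\mapsto\tau_h$, and it is precisely the ``Fr\'echet-based'' proof that Laird and McCann conjectured should exist as an alternative to Loewner's original Anselone--Korevaar argument --- which, in spirit, is closer to what you do.
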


Our proof is based on the following theorem

\begin{theorem}\label{pre} Let $f$ be a complex valued distribution defined on $\mathbb{R}^d$. Assume that $q(\tau_y)f=0$ for all $y$ with $\|y\|\leq \delta$, for a certain polynomial $q(z)=a_0+a_1z+\cdots a_nz^n$ such that $a_0\neq 0$. Then $f$ is, in distributional sense, an ordinary polynomial.
\end{theorem}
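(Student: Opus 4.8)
The plan is to avoid the Fourier transform (which is delicate here, since $f$ is only assumed to be a general, not necessarily tempered, distribution) and instead to exploit the fact that the hypothesis furnishes not a single difference equation but an entire one-parameter family of them, which can be differentiated. Fix a unit vector $u\in\mathbb{R}^d$ and consider, for $t\in[0,\delta]$, the distribution-valued map
\[
F(t):=q(\tau_{tu})f=\sum_{k=0}^n a_k\,\tau_{ktu}f,
\]
where I use $\tau_{tu}^{\,k}=\tau_{ktu}$. Since $\|tu\|=t\le\delta$, the hypothesis gives $F(t)=0$ for every $t\in[0,\delta]$.

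First I would record that for any fixed distribution the map $h\mapsto\tau_h f$ is infinitely differentiable from $\mathbb{R}^d$ into $\mathcal{D}(\mathbb{R}^d)'$, with $\partial_t\tau_{tu}f=\tau_{tu}(\partial_u f)$, where $\partial_u=\sum_i u_i\partial_{x_i}$ is the directional derivative. Consequently $t\mapsto F(t)$ is smooth and, vanishing identically on $[0,\delta]$, all of its $t$-derivatives vanish there; evaluating at $t=0$ and using $\tfrac{d^m}{dt^m}\tau_{ktu}f=k^m\tau_{ktu}(\partial_u^m f)$ yields
\[
0=\frac{d^m}{dt^m}\Big|_{t=0}F(t)=\Big(\sum_{k=0}^n a_k k^m\Big)\partial_u^m f=:c_m\,\partial_u^m f\qquad(m\ge 0).
\]

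Next I would observe that the scalars $c_m=\sum_{k=0}^n a_k k^m$ do not depend on $u$, and that they cannot all vanish unless $q$ is constant: the equations $c_1=\cdots=c_n=0$ amount to a nonsingular (Vandermonde-type) linear system in $a_1,\dots,a_n$, forcing $a_1=\cdots=a_n=0$. If $q$ is the nonzero constant $a_0$, then $q(\tau_y)f=a_0 f=0$ gives $f=0$ and we are done. Otherwise there is an integer $m^\ast\in\{1,\dots,n\}$ with $c_{m^\ast}\neq 0$, whence $\partial_u^{m^\ast}f=0$ for \emph{every} unit vector $u$. Expanding $\partial_u^{m^\ast}f=\sum_{|\alpha|=m^\ast}\binom{m^\ast}{\alpha}u^{\alpha}\,\partial^{\alpha}f$, this is a homogeneous polynomial in $u$ vanishing on the unit sphere, hence identically; by linear independence of the monomials $u^\alpha$ we conclude $\partial^{\alpha}f=0$ for all $|\alpha|=m^\ast$.

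Finally I would invoke the classical structure fact that a distribution all of whose partial derivatives of a fixed order $m^\ast$ vanish is, in the distributional sense, an ordinary polynomial of degree $<m^\ast$; this follows from the standard result that a distribution with vanishing gradient on $\mathbb{R}^d$ is constant, applied inductively to the derivatives of order $m^\ast-1,\,m^\ast-2,\dots$, completing the argument. The only genuinely delicate point is the justification of the termwise differentiation in $t$, namely the smoothness of translation on $\mathcal{D}(\mathbb{R}^d)'$ together with the vanishing of all (one-sided) derivatives of a smooth function that is zero on $[0,\delta]$; everything beyond that is elementary linear algebra and the cited structure theorem.
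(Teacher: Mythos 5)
Your proof is correct, and it takes a genuinely different route from the paper's. The paper argues purely by algebra on the functional equation: replacing $y$ by $y-h_1$, applying $\tau_{nh_1}$ and subtracting the original identity eliminates the top-order term, and after $n$ such steps (on balls of radius $\delta/2,\delta/4,\dots$) one arrives at the pure difference equation $a_0\Delta_{h_n}\Delta_{2h_{n-1}}\cdots\Delta_{nh_1}(f)=0$ for all small $h_1,\dots,h_n$, at which point the Montel-type version of Fr\'{e}chet's theorem for distributions \cite{A1,A2,A3,A4} is invoked. You instead exploit the differential structure of $\mathcal{D}(\mathbb{R}^d)'$: the map $t\mapsto\tau_{tu}f$ is smooth (this is the one point needing care, which you rightly flag; it follows from continuity of $f$ together with convergence of the difference quotients of $\tau_{-tu}\phi$ in $\mathcal{D}(\mathbb{R}^d)$), so the identity $q(\tau_{tu})f=0$ on $[0,\delta]$ may be differentiated term by term, giving $c_m\,\partial_u^m f=0$ with $c_m=\sum_{k=0}^n a_k k^m$. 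Your Vandermonde step is sound: the matrix $(k^m)_{1\le m,k\le n}$ is a column-scaled Vandermonde matrix with determinant $n!\prod_{1\le i<j\le n}(j-i)\neq 0$, so some $c_{m^*}\neq 0$ with $1\le m^*\le n$ unless $q$ is constant, and the constant case is trivial because $a_0\neq 0$. Homogeneity in $u$ plus linear independence of the monomials then gives $\partial^{\alpha}f=0$ for all $|\alpha|=m^*$, and the classical structure theorem (vanishing gradient implies constant, applied inductively) finishes. What your route buys: it is essentially self-contained, replacing the Montel--Fr\'{e}chet machinery by elementary distribution theory, it yields the explicit degree bound $\deg f\le n-1$, and in fact it only uses $a_0\neq0$ to dispose of constant $q$, so it covers any non-constant $q$ directly. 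What the paper's route buys: it never differentiates in the parameter, and it deliberately runs through Fr\'{e}chet--Montel-type theorems, which is the whole point of the note --- answering Laird and McCann's conjecture that Loewner's theorem admits a Fr\'{e}chet-based proof. So your argument is a valid, arguably simpler, substitute for Theorem \ref{pre}, though adopting it would change the expository character of the paper's main proof.
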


\begin{proof} 
 By assumption, $q(\tau_y)f=0$  for all $y$ with $\|y\|\leq \delta$, which means that 
\begin{equation}\label{uno}
0=\sum_{k=0}^na_k\tau_{ky}f(x) \text{ for all } y\in B_{d}(\delta):=\{h\in\mathbb{R}^d:\|h\|<\delta\} .
\end{equation}
Assume that $y,h_1\in  B_{d}(\delta/2)$ (so that $y^*=y-h_1\in B_d(\delta)$) and use \eqref{uno} with $y^*$ to conclude that:
\begin{equation}\label{unonuevo}
0=\sum_{k=0}^na_k\tau_{ky^*}f(x) = \sum_{k=0}^na_k\tau_{ky-kh_1}f(x) \text{ for all } y,h_1 \in B_{d}(\delta/2) .
\end{equation}

Apply $\tau_{nh_1}$ to both sides of the equation. Then 
\begin{equation}\label{tres}
0= \sum_{k=0}^na_k\tau_{nh_1}\tau_{ky-kh_1}f(x) = \sum_{k=0}^{n}a_k\tau_{(n-k)h_1}(\tau_{ky}f)(x) \text{ for all } y,h_1\in B_{d}(\delta/2).
\end{equation}
Taking differences between \eqref{tres} and \eqref{uno}, we conclude that
\begin{equation}\label{cuatro}
0= \sum_{k=0}^{n-1}a_k\Delta_{(n-k)h_1}(\tau_{ky}f)(x) \text{ for all } y,h_1\in B_{d}(\delta/2).
\end{equation}
We can repeat the argument, reducing the norm of $y, h_1$ to $\delta/4$, $\delta/8$, etc., which leads to the equation
\[
a_0\Delta_{h_n} \Delta_{2h_{n-1}} \cdots  \Delta_{(n-1)h_2} \Delta_{nh_1}(f)(x) =0 \text{ for all } h_1,\cdots,h_n\in B_{d}(\delta/2^n).
\] 
The result follows from Montel's type version of  Fr\'{e}chet's theorem for distributions, since $a_0\neq 0$ by hyphotesis (see, e.g., \cite{A1,A2,A3,A4}). 
\end{proof}

We also use the following technical result, which is well  known (see, for example, \cite{La1, McCoy, RaRo}  and \cite{Fro}, for the original exposition of this result):

\begin{lemma}[Frobenius, 1896]
Let $V$ be a finite dimensional complex vector space. Assume that $T,S:V\to V$ are commuting linear operators (i.e., $TS=ST$). Then they are simultaneously triangularizable. In particular, the eigenvalues $\lambda_i(T), \lambda_i(S)$ and $\lambda_i(TS)$ of $T,S$ and $TS$, respectively, can be arranged, counting multiplicities,  in such a way that 
$\lambda_i(TS)=\lambda_i(T)\lambda_i(S)$ for $i=1,\cdots, \dim V$. 
\end{lemma}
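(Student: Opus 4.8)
The plan is to prove the simultaneous triangularizability by induction on $n=\dim V$, since the eigenvalue identity is an immediate bookkeeping consequence once a common triangularizing basis is in hand. The engine driving the induction is the existence of a \emph{common eigenvector} of $T$ and $S$, and this is the step where both the hypothesis $TS=ST$ and the algebraic closedness of $\C$ are genuinely used. The base case $n=1$ is trivial, as every operator is already triangular.

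For the inductive step the first thing I would establish is the common eigenvector. Because $V$ is a nonzero finite dimensional complex vector space, $T$ has at least one eigenvalue $\lambda$, so the eigenspace $E_\lambda=\ker(T-\lambda I)$ is nonzero. The key observation is that $E_\lambda$ is $S$-invariant: if $Tv=\lambda v$, then $T(Sv)=S(Tv)=\lambda(Sv)$, so $Sv\in E_\lambda$, where the middle equality is exactly where commutativity enters. Now $S$ restricts to a linear operator on the nonzero finite dimensional complex space $E_\lambda$, and hence has an eigenvector $v_1\in E_\lambda$. By construction $v_1$ is simultaneously an eigenvector of $T$ (with eigenvalue $\lambda$) and of $S$. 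I expect this to be the main obstacle only in the sense that it is the one nontrivial idea; everything afterward is formal. Note that over a non-algebraically-closed field the argument would break here, which is why the lemma is stated over $\C$.

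With $v_1$ in hand, the line $W=\spn\{v_1\}$ is invariant under both $T$ and $S$, so both operators descend to well defined commuting operators $\overline{T},\overline{S}$ on the quotient $V/W$, which has dimension $n-1$. Applying the induction hypothesis produces a basis $\overline{v_2},\dots,\overline{v_n}$ of $V/W$ in which $\overline{T}$ and $\overline{S}$ are both upper triangular; lifting to representatives $v_2,\dots,v_n\in V$ and prepending $v_1$ yields a basis $v_1,\dots,v_n$ of $V$ in which $T$ and $S$ are \emph{both} upper triangular. Equivalently, one obtains a common flag $0=V_0\subset V_1\subset\cdots\subset V_n=V$ with $\dim V_i=i$ that is stable under $T$ and $S$.

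Finally I would read off the eigenvalue identity. In the common triangularizing basis, write the diagonal entries of $T$ as $t_1,\dots,t_n$ and those of $S$ as $s_1,\dots,s_n$; these are precisely the eigenvalues of $T$ and $S$ counted with multiplicity, since the characteristic polynomial of an upper triangular matrix is the product of $(z-t_i)$, respectively $(z-s_i)$. The product $TS$ is then again upper triangular in the same basis, with diagonal entries $t_is_i$, so its eigenvalues counted with multiplicity are $t_1s_1,\dots,t_ns_n$. Setting $\lambda_i(T)=t_i$, $\lambda_i(S)=s_i$ and $\lambda_i(TS)=t_is_i$ gives the claimed arrangement with $\lambda_i(TS)=\lambda_i(T)\lambda_i(S)$ for $i=1,\dots,\dim V$, completing the proof.
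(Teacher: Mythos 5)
Your proof is correct, but there is nothing in the paper to compare it against: the paper does not prove this lemma at all, quoting it as a classical fact with pointers to Frobenius's original 1896 paper and to Laffey, McCoy, and Radjavi--Rosenthal for expositions. What your write-up buys, then, is self-containedness. The argument you give is the standard and most elementary one: induction on $\dim V$, with the single nontrivial idea being the common eigenvector (the eigenspace $\ker(T-\lambda I)$ is $S$-invariant precisely because $TS=ST$, and algebraic closedness of $\mathbb{C}$ supplies eigenvectors at both stages), followed by passage to the quotient $V/\spn\{v_1\}$ and the purely formal reading of eigenvalues off the common triangular form, using that a product of upper triangular matrices is upper triangular with diagonal entries the products of the diagonal entries. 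All steps check out, including the points one might worry about: the quotient operators are well defined and still commute, the lifted basis is genuinely triangularizing for both operators, and the diagonal entries of a triangular matrix are its eigenvalues counted with multiplicity. By contrast, the sources the paper cites establish stronger statements (simultaneous triangularization of commuting families or algebras, and McCoy's characterization via property P of when eigenvalues of non-commuting matrices can be matched multiplicatively); for the application in the paper, which only ever needs pairs of commuting translation operators $\tau_y,\tau_z$, your elementary induction is exactly sufficient.
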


\begin{proof}[Proof of Theorem \ref{main}] Obviously
\[
\tau_{y+z}=\tau_y\tau_z=\tau_z\tau_y \text{ for all } y,z\in \mathbb{R}^d,
\] 
and  
\begin{equation} \label{semejanza}
\tau_{Py}=(O_{P})^{-1}\tau_y O_{P} = O_{P^t}\tau_y O_{P} \text{ for all } y\in\mathbb{R}^d.
\end{equation}
To demonstrate \eqref{semejanza} we consider first the case of ordinary functions. In that setting it is clear that  $(O_{P})^{-1}=O_{(P)^{-1}}=O_{P^t}$ and 
\begin{eqnarray*}
(O_{P^{-1}}\tau_y O_{P}g)(x) &=&  (O_{P^{-1}}\tau_y (g(Px)) =  O_{P^{-1}}(g(P(x+y))) \\
&=& O_{P^{-1}}(g(Px+Py))= g(P^{-1}Px+Py)\\
&=& g(x+Py)=\tau_{Py}(g)(x),
\end{eqnarray*}
for every function  $g$. 
Assume now that $V$ is a space of distributions and $g\in V$. Then, for every test function $\phi$, 
\begin{eqnarray*}
(O_{P^{-1}}\tau_y O_P)(g)\{\phi\} &=& O_{P^{-1}}(\tau_y O_P(g))\{\phi\}  = \frac{1}{|\det(P^{-1})|} (\tau_y O_P(g))\{O_P(\phi)\} \\
 &= & \frac{1}{|\det(P^{-1})|} \tau_y (O_P(g))\{O_P(\phi)\} = \frac{1}{|\det(P^{-1})|} (O_P(g))\{\tau_{-y}O_P(\phi)\} \\
 &=& \frac{1}{|\det(P^{-1})|}  \frac{1}{|\det(P)|} g\{O_{P^{-1}}\tau_{-y}O_P(\phi)\} =  g\{O_{P^{-1}}\tau_{-y}O_P(\phi)\} \\
 &=&  g\{\tau_{-Py}(\phi)\}  = \tau_{Py}(g)\{\phi\}, \\
\end{eqnarray*}
which is what we wanted to prove.  It follows that  
\begin{itemize}
\item[$(a)$] For any $y\in\mathbb{R}^d$, the operators $\tau_{y}:V\to V$ and $\tau_{Py}:V\to V$ have the very same eigenvalues and characteristic polynomial. 

\item[$(b)$]  For any $y,z\in\mathbb{R}^d$, the  operators $\tau_{y}:V\to V$ and $\tau_z:V\to V$ are simultaneously triangularizable, since they are commuting operators (i.e., 
$\tau_y\tau_z=\tau_z\tau_y$). In particular, it is possible to arrange the eigenvalues   $\lambda_i(\tau_y), \lambda_i(\tau_z)$ and $\lambda_i(\tau_{y+z})$ of  $\tau_{y}$, $\tau_z$ and $\tau_{y+z}$, respectively,  in such a way that
\[
\lambda_i(\tau_{y+z})=\lambda_i(\tau_y)\lambda_i(\tau_z), \ \ i=1,\cdots, N:=\dim V.
\]
\end{itemize}
Let $y\in\mathbb{R}^d$ . From $(a)$ we have that, for a certain permutation $\sigma$ of $\{1,\cdots, N\}$ (which depends on $P$),  
\[
\lambda_i(\tau_{Py})=\lambda_{\sigma(i)}(\tau_y), \ i=1,\cdots, N.
\] 
Moreover, if we set $z=(P-I)y$, then $Py=y+(P-I)y)$ and $(b)$ implies that:
\[
\lambda_{\sigma(i)}(\tau_y)=\lambda_i(\tau_{Py})=\lambda_i(\tau_y)\lambda_i(\tau_{(P-I)y}), \ i=1,\cdots,N.
\] 
Hence 
\[
\lambda_i(\tau_{(P-I)y})=\frac{\lambda_{\sigma(i)}(\tau_y)}{\lambda_i(\tau_y)}, \text{  } i=1,\cdots,N,
\]
since $\tau_y$ is an automorphism, which implies that $\lambda_i(\tau_y)\neq 0$ for all $i$. 

Take $y=e_1=(1,0,\cdots, 0)\in\mathbb{R}^d$. Let us consider the polynomial 
$$q(z)=\prod_{\sigma\in S_N}\prod_{i=1}^N(z- \frac{\lambda_{\sigma(i)}(\tau_{e_1})}{\lambda_i(\tau_{e_1})}),$$ 
where $S_N$ denotes the set of permutations of $\{1,\cdots, N\}$.  Then $q(z)=a_0+a_1z+\cdots a_mz^m$ for certain coefficients $a_k$, with $m=N\cdot N!$, and $a_0\neq 0$ because all eigenvalues $\lambda_i(\tau_{e_1})$ are different from $0$ (since $\tau_{e_1}$ is injective). Moreover, $q(z)$  is a multiple of the characteristic polynomial of the operator $\tau_{(P-I)e_1}$, for every $P\in \mathbf{O}(d)$. It follows that $q(z)$ is a multiple of the characteristic polynomial of $\tau_z$ for every $z\in\mathbb{R}^d$ whose norm is equal to the norm of $(P-I)e_1$ for some $P\in \mathbf{O}(d)$, since $\|z_1\|=\|z_2\|$ implies that there exist $P\in \mathbf{O}(d)$ such that $Pz_1=z_2$, and $\tau_{z_1}$, $\tau_{Pz_1}$ have the very same characteristic polynomial.  Obviously, $\{\|(P-I)e_1\|: P\in\mathbf{O}(d)\}=[0,2]$, since $d>1$. Henceforth, 
Hamilton's theorem implies that $q(\tau_{z})=0$ for all $\|z\|\leq 2$.   Now we can apply Theorem \ref{pre} to conclude that $f$ is, in distributional sense, an ordinary polynomial. 
\end{proof}

\begin{remark} For $d=1$ the result is false, since $V=\mathbf{span}\{2^{x},2^{-x}\}$ is invariant by isometries of $\mathbb{R}$. 
\end{remark}

\bibliographystyle{amsplain}


\end{document}